\newtheorem{theorem}{Theorem}[section]
\newtheorem*{theorem*}{Theorem}
\newtheorem{lemma}[theorem]{Lemma}
\newtheorem*{lemma*}{Lemma}
\newtheorem{claim}[theorem]{Claim}
\newtheorem{conjecture}[theorem]{Conjecture}
\theoremstyle{definition}
\newtheorem{remark}[theorem]{Remark}
\renewenvironment{proof}{
\par\noindent{\it Proof.}} {\mbox{}\hfill$\blacksquare$ \par }
\newcommand{\bbP}{\mathbb{P}}
\newcommand{\bbA}{\mathbb{A}}
\newcommand{\bbG}{\mathbb{G}}
\newcommand {\Gal}{\mathord{\rm {Gal}}}
\newcommand{\Pic}{\mathord{\rm{Pic}}}
\newcommand{\id}{\mathord{\rm{id}}}
\newcommand{\sm}{\mathord{\rm{sm}}}
\newcommand{\sing}{\mathord{\rm{sing}}}
\newcommand{\rk}{\mathord{\rm{rk}}}
\DeclareFontFamily{U}{wncy}{}
    \DeclareFontShape{U}{wncy}{m}{n}{<->wncyr10}{}
    \DeclareSymbolFont{mcy}{U}{wncy}{m}{n}
    \DeclareMathSymbol{\Sh}{\mathord}{mcy}{"58}
\title{\LARGE{\scshape{On the arithmetic of intersections of\\ two quadrics containing a conic}}}
\author{Per Salberger}
\date{}
\begin{document}

\maketitle

\let\thefootnote\relax\footnotetext{This is a retyped version of an unpublished typescript dated 1993. I wrote that paper  in 1993.
It was not submitted for publication.
 In the last two years,  the paper was used by J. Iyer and R. Parimala (https://arxiv.org/abs/2201.12780), by J.-L. Colliot-Th\'el\`ene (https://arxiv.org/abs/2208.04121)
and by A. Molyakov (https://arxiv.org/abs/2305.00313). 
 It thus seems reasonable to make that  paper available.}

\section{Introduction}

The aim of this paper is to prove the following two conjectures of Colliot-Th\'el\`ene, Sansuc and Swinnerton-Dyer \cite[\S16]{CSS} in the case of intersections of two quadrics containing a conic.  

\begin{conjecture}\label{01}
Let $k$ be a number field and $X\subset \bbP^n_k$, $n\geq 6$ be a non-conical geometrically integral intersection of two quadrics. Suppose that there exists a smooth $k_v$-point on $X$ for each place $v$ of $k$. Then there exists a smooth $k$-point on $X$.  
\end{conjecture}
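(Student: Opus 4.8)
The plan is to realise $X$ as a conic bundle over a projective space and to conclude by the fibration method, using crucially that for $n\geq 6$ the variety carries no Brauer--Manin obstruction. The heart of the matter is the case of smooth $X$ (singular non-conical intersections of two quadrics are reduced to Hasse--Minkowski for quadrics, or to lower-dimensional instances, by auxiliary projection arguments, and the conic bundle construction below adapts to the smooth locus in general). For smooth $X$ one has $\dim X=n-2\geq 4$, so by the Lefschetz hyperplane theorem $\Pic(\overline X)=\bbZ$, generated by the hyperplane class with trivial Galois action; hence $H^1(k,\Pic(\overline X))=0$ and $\mathrm{Br}(X)/\mathrm{Br}(k)=0$. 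There is therefore no Brauer--Manin obstruction on $X$ itself, and the task reduces to producing, by a geometric fibration, a single smooth $k$-rational fibre carrying a rational point.

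The geometric engine is the following conic fibration, whose existence over $k$ I must \emph{establish} rather than assume. A $k$-rational conic $C\subset X$ is the same datum as a $k$-plane $P\cong\bbP^2$ on which $Q_0$ and $Q_1$ restrict to proportional quadratic forms, so that $P\cap X=C$; projecting $X$ from such a $P$ exhibits the blow-up $\widetilde X\to\bbP^{n-3}$ as a conic bundle, the fibre over a $3$-plane $L\supset P$ being the conic residual to $C$ in the degree-$4$ curve $L\cap X$. The planes of this type form a locally closed subvariety $\mathcal P$ of the Grassmannian of $2$-planes in $\bbP^n$, cut out by the proportionality of $Q_0|_P$ and $Q_1|_P$ inside the $6$-dimensional space of quadratic forms on $P$; one computes $\dim\mathcal P=(3n-6)-5=3n-11>0$ for $n\geq 6$, and $\mathcal P$ is smooth and geometrically rationally connected. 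Since the Fano scheme of conics on $X$ dominates $X$, through every smooth local point there passes a conic contained in $X$, whence $\mathcal P(k_v)\neq\emptyset$ for every place $v$. Producing a $k$-point of $\mathcal P$ --- equivalently a conic on $X$ defined over $k$ --- is exactly the content of the hypothesis ``containing a conic'', and it is this descent, not a naive assumption, that must carry the argument.

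Granting the conic bundle $\pi:\widetilde X\to B=\bbP^{n-3}$ over $k$, I would finish by the fibration method over the rational base $B$. The smooth local points of $X$ yield via $\pi$ points $b_v\in B(k_v)$ whose fibres are smooth conics with a $k_v$-point; approximating the $b_v$ by a general $k$-line $\ell\cong\bbP^1\subset B$ through such approximations, one restricts $\pi$ to a conic bundle surface $\widetilde X_\ell\to\bbP^1$. For conic bundle surfaces over $\bbP^1$ the Hasse principle holds modulo the Brauer--Manin obstruction (Colliot-Th\'el\`ene--Sansuc--Swinnerton-Dyer), the obstructing classes living in $\mathrm{Br}(\widetilde X_\ell)/\mathrm{Br}(k)$ and arising as residues along the degenerate fibres; because $\mathrm{Br}(X)/\mathrm{Br}(k)=0$, the line $\ell$ can be arranged so that these residues cancel, killing the obstruction on $\widetilde X_\ell$, and a smooth fibre then acquires a $k$-point, which is a smooth $k$-point of $X$. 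The two hard steps are precisely those that replace the unjustified assumption of a rational conic: \emph{first}, descending from the everywhere-locally-soluble, geometrically rationally connected parameter variety $\mathcal P$ to an actual $k$-conic; and \emph{second}, choosing $\ell$ so that $\widetilde X_\ell$ stays everywhere locally soluble while its vertical Brauer classes vanish. The second is a delicate but standard application of the fibration method; the first is the genuine crux, and is exactly the point at which the hypothesis that $X$ contains a conic becomes indispensable.
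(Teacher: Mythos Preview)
The statement you are attempting is Conjecture~\ref{01}, which the paper does \emph{not} prove in general; it only establishes the special case in which $X$ already contains a $k$-conic (Theorem~\ref{03}). Your proposal recognises this: you reduce everything to producing a $k$-point on the parameter variety $\mathcal P$ of conics on $X$, and you concede in your final sentence that this is ``the genuine crux'' where the conic hypothesis ``becomes indispensable.'' But then you have not proved the conjecture. You have replaced it by the Hasse principle for the geometrically rationally connected variety $\mathcal P$, and ``rationally connected with local points everywhere'' is not known to imply a global point; that is Colliot-Th\'el\`ene's conjecture, of comparable depth to what you are trying to establish. No descent is actually carried out in your text, and none is available by the tools you invoke.

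Even granting a $k$-conic, your sketch diverges from the paper's argument and is looser at the critical point. The paper does not restrict the conic bundle over $\bbP^{n-3}$ to a generic line; it slices $X$ by hyperplanes through the plane $\Pi$ of the conic and runs an induction on $n$ (Theorems~\ref{31}, \ref{21}, \ref{11}), carefully checking at each step that the restricted pencil keeps enough rank. The delicate step is $n=5$: there one must land on a degree-$4$ del Pezzo surface with \emph{trivial} Brauer group, and the paper achieves this by forcing the discriminant quintic of the restricted pencil to be irreducible via Ekedahl's Hilbert irreducibility with weak approximation (Claims~\ref{25}, \ref{26}), so that Theorem~\ref{05} applies. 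Your phrase ``the line $\ell$ can be arranged so that these residues cancel'' gestures at the same phenomenon but does not supply the mechanism; it is precisely the simultaneous control of local solvability and Brauer triviality that requires the Hilbert-irreducibility-plus-approximation input, and this is the technical heart of the paper's proof.
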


\begin{conjecture}\label{02}
Let $k$ be a number field and $X\subset \bbP^5_k$ be a smooth intersection of two quadrics. Suppose that there exists a $k_v$-point on $X$ for each place $v$ of $k$. Then there exists a $k$-point on $X$.
\end{conjecture}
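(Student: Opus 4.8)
The plan is to prove the plain Hasse principle for $X$ in two movements: first to show that there is no Brauer--Manin obstruction for such threefolds, so that the assertion reduces to ``the Brauer--Manin obstruction is the only one'', and then to produce a rational point by the fibration method applied to the pencil of quadrics containing $X$. For the first movement, note that $X$ is a smooth complete intersection of dimension $3$ in $\bbP^5_k$, so the Lefschetz hyperplane theorem gives $\Pic(\overline X)=\bbZ\cdot\mathcal O_X(1)$ with trivial Galois action; hence $H^1(k,\Pic\overline X)=0$ and $\mathrm{Br}_1(X)=\mathrm{Br}(k)$. As $X$ becomes rational over $\overline k$ (project from a line, which exists geometrically), its geometric Brauer group vanishes, so $\mathrm{Br}(X)=\mathrm{Br}(k)$ and $X(\mathbf A_k)^{\mathrm{Br}}=X(\mathbf A_k)$. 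The local hypothesis thus already puts us on the Brauer--Manin set, and the whole content of the conjecture is that for this class of rationally connected threefolds local solubility suffices for a rational point.

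To exploit this I would fibre $X$ by surfaces. Choose a pencil of hyperplanes in $\bbP^5$ whose axis is a $\bbP^3$ in sufficiently general position, arranged so that a fixed adelic point of $X$ can be routed through smooth members; blowing up $X$ along the curve $X\cap\bbP^3$ produces $\pi\colon\widetilde X\to\bbP^1$ whose fibre $X_t=X\cap H_t$ over $t\in\bbP^1$ is an intersection of two quadrics in $H_t\cong\bbP^4$, i.e. a del Pezzo surface of degree $4$, smooth away from a finite discriminant set. A $k$-point of any fibre $X_{t_0}$ with $t_0\in\bbP^1(k)$ is a $k$-point of $X$, so it suffices to locate one $t_0$ for which $X_{t_0}$ is smooth, everywhere locally soluble, and globally soluble.

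The arithmetic engine, and the genuine difficulty, is the passage from the adelic point to such a $t_0$. Running the fibration method of Colliot-Th\'el\`ene--Sansuc--Swinnerton-Dyer, one uses approximation together with a form of Schinzel's hypothesis to choose $t_0\in\bbP^1(k)$ at which the fibre stays everywhere locally soluble, and then invokes that the Brauer--Manin obstruction is the only obstruction for smooth degree-$4$ del Pezzo surfaces to deduce $X_{t_0}(k)\neq\varnothing$; the vanishing of $\mathrm{Br}(X)$ forces the fibrewise Brauer classes to be coherent along the pencil, which is what allows a good $t_0$ to exist. The two steps carrying all the weight are (a) making the selection of a good fibre unconditional, i.e. dispensing with Schinzel's hypothesis, and (b) the Hasse principle for the individual del Pezzo fibres, known in general only modulo Schinzel and the finiteness of $\Sh$. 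Both are controlled by the arithmetic of the genus-$2$ curve $C\colon s^2=\det(M_0+tM_1)$ attached to the pencil and of the torsor $F(X)$ of lines on $X$, a principal homogeneous space under the abelian surface $J=\mathrm{Jac}(C)$; a $k$-rational line or conic on $X$ trivialises enough of this descent to remove the conditionality. The hard part is precisely to obtain such a rational curve, hence an unconditional good fibre, for an \emph{arbitrary} $X$ — which is the obstacle this circle of ideas must overcome.
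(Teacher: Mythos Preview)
Your proposal is honest about its own incompleteness, and indeed it does not prove the statement: the two load-bearing steps you isolate --- (a) finding an unconditional good fibre without Schinzel, and (b) the Hasse principle for an arbitrary smooth degree-$4$ del Pezzo --- are genuinely open in the generality you need, and you do not close either gap. Your closing paragraph says so explicitly: you need a $k$-rational line or conic on $X$ to ``trivialise enough of this descent'', and you have no mechanism to produce one on an \emph{arbitrary} smooth $X\subset\bbP^5$. So as a proof of Conjecture~\ref{02} this is a correct outline of why the problem is hard, not a proof.

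It is worth stressing that the paper does not prove Conjecture~\ref{02} either. It is stated as a conjecture of Colliot-Th\'el\`ene, Sansuc and Swinnerton-Dyer, and the paper establishes only the special case in which $X$ contains a smooth $k$-conic (Theorem~\ref{04}, via Theorem~\ref{21}). So there is no ``paper's own proof'' of the full statement to compare against.

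Where a comparison \emph{is} meaningful is with the paper's argument in the conic case, which is close in spirit to your fibration idea but differs in two decisive ways. First, the paper fibres $X$ not by a generic pencil of hyperplanes through a $\bbP^3$, but by the pencil of hyperplanes through the plane $\Pi$ spanned by the given conic $C$; every fibre $X\cap H$ is then a degree-$4$ del Pezzo in $\bbP^4$ that already contains $C$. This is exactly the extra structure that makes step (b) unconditional: for a smooth dP${}_4$ containing a conic and with surjective $\mathrm{Br}(k)\to\mathrm{Br}(Y)$, the Hasse principle is Theorem~\ref{05} (Salberger's conic-bundle result), no Schinzel and no $\Sh$ needed. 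Second, to force the Brauer condition on the fibre, the paper does not rely on the global vanishing $\mathrm{Br}(X)=\mathrm{Br}(k)$ and a coherence argument along the pencil; instead it uses a strong (Ekedahl) form of Hilbert irreducibility to choose $H$ so that the restricted discriminant polynomial $\det\big((F+\lambda G)\vert_H\big)$ is irreducible over $k$, which via \cite[3.19]{CSS} kills $H^1\big(k,\Pic\overline{X\cap H}\big)$ and gives the required surjectivity. Your sketch would benefit from both refinements --- but, of course, only once a conic is in hand, which is precisely the hypothesis the paper assumes and you cannot supply.
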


The paper of Colliot-Th\'el\`ene, Sansuc, Swinnerton-Dyer \cite{CSS} contains a proof of these conjectures in the following cases:

\begin{itemize}
    \item[(a)] $X$ contains two conjugate singular points
    \item[(b)] $X$ contains two conjugate skew lines
    \item[(c)] $X$ contains a two-dimensional quadric
    \item[(d)] $n\geq 8$
\end{itemize}

The authors of \cite{CSS} used a trick of Mordell suggested to them by Heath-Brown to  deduce (d) from the case where the intersection of two quadrics contains a conic. It is thus interesting to consider intersections of quadrics of lower dimension containing
a conic. The basic idea of the proof of (d) in \cite[\S10]{CSS} is to
construct hyperplane sections containing the conic and smooth $k_v$-points at each place $v$ of $k$. By choosing these hyperplanes with great care it is possible to reduce to a special class of singular intersections of two quadrics in $\bbP_k^4$ containing a conic.
The authors of \cite{CSS} then complete their proof by applying the deep results for Ch\^{a}telet surfaces obtained in the previous sections (cf. also \cite{Sb2}, \cite{Sb3} for other proofs of the results on Ch\^{a}telet surfaces). But the method just described does not seem to be sufficiently powerful to prove the following result.

\begin{theorem}\label{03}
Let $k$ be a number field and $X\subset \bbP^n_k$ $(n\geq 6)$ be a non-conical geometrically integral intersection of two quadrics containing
a conic. Suppose that there exists a smooth $k_v$-point on $X$ at each place $v$ of $k$. Then there exists a smooth $k$-point on $X$.
\end{theorem}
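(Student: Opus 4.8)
The plan is to use the conic to realise $X$ as a variety birational to a conic bundle, then to cut $X$ down by hyperplanes through the conic to a del Pezzo surface of degree $4$ containing that conic, i.e.\ to a Ch\^atelet-type surface, for which the Hasse principle is provided by \cite{CSS} (and \cite{Sb2}, \cite{Sb3}). First I would put $X$ into a normal form. Let $\Pi\cong\bbP^2$ be the plane spanned by the conic $C$ and let $\lambda Q_1+\mu Q_2$ be the pencil of quadrics through $X$. Each restriction $Q_i|_\Pi$ is a conic in $\Pi$, or all of $\Pi$, and contains $C$; comparing degrees, either $Q_1|_\Pi=Q_2|_\Pi=C$, in which case the restricted forms are proportional and a $k$-linear substitution in the pencil achieves $\Pi\subset Q_1$, or some member of the pencil already contains $\Pi$. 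Hence we may assume $\Pi\subset Q_1$. If $C$ is geometrically reducible (two lines, possibly conjugate, or a double line) then $X$ contains a pair of conjugate skew lines or a pair of conjugate singular points, so we are in case (a) or (b) of \cite{CSS}; thus we may assume $C$ is a smooth conic, and then $\Pi\not\subset Q_2$ and $C=\Pi\cap Q_2$.

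Next I would build the conic bundle by projecting away from $\Pi$. For $y$ outside a proper closed subset, the $\bbP^3$ through $\Pi$ parametrised by $y\in\bbP^{n-3}$ meets $Q_1$ in $\Pi\cup H_y$, with $H_y\cong\bbP^2$ the residual plane, and the residual of $C$ inside $X\cap\bbP^3$ is the conic $D_y=Q_2\cap H_y$; non-conicality of $X$ ensures this really varies in an $(n-3)$-parameter family. Blowing up $C$ in $X$ and resolving the remaining indeterminacy coming from the singularities of $X$ yields a smooth projective $Y$, $k$-birational to $X$, together with a conic bundle $\pi\colon Y\to\bbP^{n-3}$ whose generic fibre is the conic $D_y$. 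Because $Y\to X$ is a birational morphism of projective varieties, the hypothesis that $X$ has a smooth $k_v$-point for all $v$ gives $Y(k_v)\neq\emptyset$ for all $v$, and any $k$-point of $Y$ lying off the exceptional locus --- in particular any $k$-point on a general fibre of $\pi$ --- maps to a smooth $k$-point of $X$; so it is enough to prove the Hasse principle for $Y$. I would also record that $D_y$ meets $\Pi$ in the two points $C\cap(\Pi\cap H_y)$, so $C$ furnishes a canonical $2$-section of $\pi$: this, together with the rigidity $\Pi\subset Q_1$, is the extra structure one must exploit.

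Finally I would descend to a surface. Choose a linear $\Lambda\cong\bbP^4$ with $\Pi\subset\Lambda\subset\bbP^n$ --- equivalently, cut $X$ by $n-4$ hyperplanes through $\Pi$, corresponding to a line $\ell\subset\bbP^{n-3}$ --- and set $S=X\cap\Lambda$. For $\Lambda$ general a Bertini argument over $k$ shows $S$ is a geometrically integral intersection of two quadrics in $\bbP^4$ containing $C$, with $\Pi\subset Q_1\cap\Lambda$, so that projection from $\Pi$ in $\Lambda$ makes $S$ a conic bundle over $\bbP^1$; when $S$ is smooth it is a del Pezzo surface of degree $4$ carrying a conic, hence a conic bundle over $\bbP^1$ with exactly four degenerate geometric fibres, i.e.\ a surface of Ch\^atelet type. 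The essential requirement is to choose $\Lambda$ over $k$ so that, in addition, $S(k_v)$ contains a smooth point for every place $v$: at the finitely many bad places one uses the given local smooth points of $X$ and the implicit function theorem to force $\Lambda$ to pass near them, while at the remaining places good reduction together with the Weil/Lang--Weil estimates supplies local solubility automatically. Granting such a choice, the Hasse principle for (possibly degenerate) Ch\^atelet surfaces from \cite{CSS}, supplemented by \cite{Sb2}, \cite{Sb3} --- and by the lower-dimensional parts (a)--(c) of \cite{CSS} in the degenerate cases --- produces a smooth $k$-point on $S$, hence on $X$.

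The main obstacle is this last step when $n$ is small. For $n=6$ there are only $n-4=2$ hyperplanes through $\Pi$ to vary, so genericity alone is far too weak to force $S$ to be simultaneously smooth (or at worst mildly degenerate), geometrically integral, locally soluble at all bad places, and of a type covered by the surface-level Hasse principle --- this is exactly where the hyperplane method of \cite[\S10]{CSS} breaks down and needs $n\geq8$. Overcoming it requires genuinely using the rigidity supplied by the conic: the distinguished member $Q_1\supset\Pi$ of the pencil, and the $2$-section of $\pi$ it determines, must be used to organise the local approximations and to control the degenerations of the induced pencil of conics on $S$. A subsidiary difficulty, to be tracked throughout, is that $X$ need not be smooth, so one must carefully distinguish $k$-points from \emph{smooth} $k$-points across all the blow-ups and linear sections.
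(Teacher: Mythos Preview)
Your proposal correctly sets up the geometry and correctly identifies the central difficulty, but it does not resolve it; the missing idea is precisely the new ingredient of the paper.

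The gap is in the final step. A smooth intersection of two quadrics $S\subset\bbP^4$ containing a conic is a degree~$4$ del Pezzo surface with a conic bundle structure over $\bbP^1$, but it is \emph{not} in general a Ch\^atelet surface, and the Hasse principle can genuinely fail for such $S$ because of a Brauer--Manin obstruction. The results of \cite{CSS}, \cite{Sb2}, \cite{Sb3} on Ch\^atelet surfaces therefore do not apply to a generic linear section. What is required instead is Theorem~\eqref{05}: the Hasse principle holds for such $S$ provided the map $H^2_{\text{\'et}}(k,\bbG_m)\to H^2_{\text{\'et}}(S,\bbG_m)$ is surjective. The paper's key new step --- exactly what fills the hole you flag at $n=6$ --- is to \emph{force} this Brauer condition on the chosen section: one shows (Theorem~\eqref{11}) that if the discriminant polynomial $\det\bigl((F+\lambda G)|_H\bigr)$ of the restricted pencil is irreducible over $k$, then the Brauer group of the section is trivial, and then a version of Hilbert's irreducibility theorem with weak approximation is used to choose the hyperplane $H$ so that this irreducibility holds simultaneously with the local solubility constraints. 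The ``rigidity supplied by the conic'' you invoke is not what closes the gap; the Hilbert-irreducibility control of the Brauer group is.

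There is also a structural difference. Rather than cutting from $\bbP^n$ straight down to $\bbP^4$, the paper first proves the result in $\bbP^5$ (this is where the Hilbert-irreducibility argument lives), and then for $n\geq 6$ inducts one hyperplane at a time down to $n=5$, under auxiliary rank hypotheses on the pencil; a separate case analysis --- including a Weil restriction argument when two conjugate forms of rank~$4$ appear --- handles the degenerate pencils for which those hypotheses fail. Your direct descent to $\bbP^4$ bypasses this organisation and leaves the degenerate cases only vaguely addressed.
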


\begin{theorem}\label{04}
Let $k$ be a number field and $X\subset \bbP^5_k$ be a smooth intersection of two quadrics containing a conic. Suppose that there exists a $k_v$-point on $X$ for each place $v$ of $k$. Then there exists a $k$-point on $X$.
\end{theorem}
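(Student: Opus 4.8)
The plan is to deduce Theorem~\ref{04} from Theorem~\ref{03} by a dimension-raising argument, the same kind of trick (attributed to Mordell, via Heath-Brown, in \cite{CSS}) that was used in \cite{CSS} to reduce case (d) to the conic case. Concretely, suppose $X\subset\bbP^5_k$ is a smooth intersection of two quadrics containing a conic $C$, and suppose $X$ has a $k_v$-point for every place $v$. I would embed $X$ into a higher-dimensional intersection of two quadrics $Y$ as follows: view $\bbP^5_k\subset\bbP^n_k$ as a linear subspace for a suitable $n\geq 6$, pick generic linear forms, and consider the intersection of two quadrics whose equations are $Q_i(x_0,\dots,x_5) + (\text{quadratic terms in the new variables}) = 0$, chosen so that $Y$ is geometrically integral, non-conical, contains the same conic $C$ (sitting inside the $\bbP^5$), and so that $X$ is recovered as a linear section $Y\cap\bbP^5$. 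The point is that a smooth $k$-point on $Y$ lying on $X$ — or more precisely a suitably chosen linear section through the conic — produces a $k$-point on $X$.

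Let me be more careful about the direction of the reduction, since Theorem~\ref{03} concerns smooth rational points on a possibly singular higher-dimensional variety, whereas Theorem~\ref{04} wants an honest rational point on a smooth surface. The cleanest route: given the smooth surface $X\subset\bbP^5_k$ containing the conic $C$, form the cone-like or join construction to get $X'\subset\bbP^6_k$, an intersection of two quadrics containing $C$, with a morphism $X'\dashrightarrow X$ (say, linear projection from a point, or a conic bundle / quadric bundle structure) such that smooth $k_v$-points on $X$ lift to smooth $k_v$-points on $X'$ at every place, and such that a smooth $k$-point on $X'$ maps to a $k$-point on $X$. One concrete option: take $X' = $ the intersection of the two quadrics $Q_0 = Q_1 = 0$ in $\bbP^6$ with coordinates $(x_0:\dots:x_5:x_6)$, where $Q_i$ are the dehomogenizations/homogenizations arranging that $X' \to X$ is projection away from the extra point $(0:\dots:0:1)$ — but one must check $X'$ is non-conical and geometrically integral, and that the local points lift. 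The conic $C\subset X$ lifts to a conic in $X'$ (the same conic in the $\bbP^5$), so Theorem~\ref{03} applies to $X'$ once $n = 6 \geq 6$ and the hypotheses are verified. Then the smooth $k$-point on $X'$ produced by Theorem~\ref{03} yields, via the projection, a $k$-point on $X$.

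The steps, in order: (1) fix the ambient embedding and the precise equations of $Y=X'$, ensuring $X'\subset\bbP^6_k$ is a geometrically integral non-conical intersection of two quadrics containing the conic $C$; (2) verify that the rational map $\pi\colon X'\dashrightarrow X$ is defined over $k$, is dominant, and that for each place $v$ a $k_v$-point of $X$ lifts to a \emph{smooth} $k_v$-point of $X'$ — here one uses that $X$ is smooth and that $\pi$ has nice fibres (a linear $\bbP^1$ or a conic) over the smooth locus, plus a Hensel/implicit-function-theorem argument for the non-archimedean places and a connectedness argument for the archimedean ones; (3) apply Theorem~\ref{03} to $X'$ to get a smooth $k$-point $P'\in X'(k)$; (4) observe $\pi(P')\in X(k)$ is defined (choosing the construction so that the indeterminacy locus of $\pi$ meets $X'$ only in a locus avoidable after possibly modifying the construction, or so that $\pi$ extends to a morphism on a neighbourhood of the smooth locus).

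The main obstacle I expect is step (2): one must engineer the dimension-raising construction so that the \emph{smoothness} of the lifted local points is guaranteed and so that a rational point upstairs genuinely descends to one downstairs. It is easy to write down a higher-dimensional $X'$ containing $C$, but controlling its singular locus (to keep it non-conical and geometrically integral while keeping $X$ as a section/image), and simultaneously ensuring the lifting of local points lands in the smooth locus of $X'$, requires a careful choice — this is exactly the delicate "choosing hyperplanes with great care" that \cite{CSS} alludes to, now run in the opposite direction. A secondary subtlety is handling the archimedean and bad non-archimedean places uniformly in the lifting argument.
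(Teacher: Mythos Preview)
Your proposal has a fundamental circularity problem. In this paper, Theorem~\ref{03} (the case $n\geq 6$) is proved by induction on $n$, and the base case $n=5$ is precisely Theorem~\ref{21}, which is the strengthened form of Theorem~\ref{04}. Indeed the introduction says explicitly that ``to prove \eqref{03} for $n=6$, it is necessary to first generalize \eqref{04} to a wider class of intersections of two quadrics in $\bbP^5_k$''. So invoking Theorem~\ref{03} to deduce Theorem~\ref{04} assumes what you are trying to prove. The Mordell/Heath-Brown trick in \cite{CSS} that you allude to goes in the \emph{opposite} direction: it \emph{reduces} the high-dimensional case to the low-dimensional conic case, not the other way around.

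Even if one had an independent proof of the $n\geq 6$ case (say the result of \cite{CSS} for $n\geq 8$), your dimension-raising construction is not made to work. The naive choice --- keeping the same two forms and adding a new coordinate $x_6$ --- produces a cone over $X$ with vertex $(0:\cdots:0:1)$, which is conical and therefore excluded by the hypotheses of Theorem~\ref{03}. Perturbing by quadratic terms in $x_6$ destroys the existence of a rational map $X'\dashrightarrow X$ taking smooth points to points, and your own sketch acknowledges that controlling simultaneously the non-conicality, geometric integrality, the singular locus of $X'$, and the descent of a smooth $k$-point back to $X$ is exactly the delicate part --- but no mechanism is offered.

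The paper's actual route is to slice \emph{down}, not up: one takes hyperplane sections $X\cap H\subset\bbP^4_k$ through the given conic, uses Bertini and an implicit-function argument to arrange that the slice carries the required local smooth points, and then uses Ekedahl's effective Hilbert irreducibility theorem to choose $H$ so that the discriminant polynomial of the restricted pencil is irreducible. This last condition forces the Brauer group of the slice to be trivial (Theorem~\ref{11}(ii)), and one can then invoke the deep input Theorem~\ref{05} on degree-4 Del Pezzo surfaces with trivial Brauer obstruction. That is where the real arithmetic content lies, and it is not bypassed by any formal dimension-raising.
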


We note that the statements in theorem \eqref{03} respectively \eqref{04} are false for intersections of quadrics in $\bbP^5_k$ respectively $\bbP^4_k$. The main new ingredient in the proofs of \eqref{03}, \eqref{04} is the following deep result of the author. 

\begin{theorem}\label{05}
Let $k$ be a number field and $Y\subset \bbP^4_k$ be a smooth intersection of two quadrics containing a conic such that the functorial map $$H^2_{\text{\'et}}(k,\bbG_m) \to H^2_{\text{\'et}}(Y,\bbG_m)$$ is surjective. Suppose that there exists a $k_v$-point on $Y$ for each place $v$ of $k$. Then there exists a $k$-point on $Y$.
\end{theorem}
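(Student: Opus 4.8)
The plan is to use the conic to equip $Y$ with a conic bundle structure over $\bbP^1$ and then to deduce the statement from the known results on the Hasse principle for conic bundle surfaces over $\bbP^1$ (for Ch\^{a}telet surfaces, Colliot-Th\'el\`ene--Sansuc--Swinnerton-Dyer \cite{CSS}, and their extensions by the author), the Brauer hypothesis being exactly what rules out a Brauer--Manin obstruction. For the construction, let $C\subset Y$ be the given conic; we may assume $C$ is a smooth conic with $C(k)=\varnothing$, since if $C(k)\neq\varnothing$ then $Y(k)\neq\varnothing$, and if $C$ is not geometrically integral the statement reduces to a case already settled in \cite{CSS} (or is trivial). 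Let $\Pi=\langle C\rangle\cong\bbP^2_k$ be the plane spanned by $C$. Each quadric of the pencil cutting out $Y$ restricts on $\Pi$ to $C$ or to all of $\Pi$, and at most one of them contains $\Pi$; hence $Y\cap\Pi=C$, and for every hyperplane $H\supset\Pi$ the curve $Y\cap H\subset H\cong\bbP^3_k$ has degree $4$ and contains $C$, so $Y\cap H=C+D_H$ with $D_H$ a residual conic and $C+D_H\sim -K_Y$. An adjunction computation gives $C^2=0$, whence $D_H^2=D_H\cdot D_{H'}=0$, so $|{-K_Y-C}|$ is a base-point-free pencil; it defines a conic bundle $\pi\colon Y\to\bbP^1_k$ over $k$ in which $C$ is a bisection. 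A comparison of Picard numbers ($\rk\Pic\overline{Y}=6$) shows that $\pi$ has exactly four degenerate geometric fibres, each a pair of lines on $Y$.

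Next I would read off the consequences of the Brauer hypothesis. Since $Y$ is geometrically rational, $\mathrm{Br}(Y)/\mathrm{Br}_0(Y)\cong H^1(k,\Pic\overline{Y})$, where $\mathrm{Br}_0(Y)$ denotes the image of $\mathrm{Br}(k)$; and for a conic bundle this group is computable from the Galois action on the eight components of the degenerate fibres. The assumption that $H^2_{\text{\'et}}(k,\bbG_m)\to H^2_{\text{\'et}}(Y,\bbG_m)$ be surjective says precisely that $\mathrm{Br}(Y)=\mathrm{Br}_0(Y)$, i.e.\ that this $H^1$ vanishes. The task is to unwind this vanishing into explicit control of the four degenerate fibres of $\pi$ --- of the residue fields of the closed points of $\bbP^1_k$ beneath them and of the quadratic extensions over which their components become rational --- and, exploiting the extra rigidity furnished by the bisection $C$, to put $\pi$ in the configuration to which the unconditional conic bundle theorem applies.

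Granting this, the conclusion is quick. By that theorem, the Brauer--Manin obstruction is then the only obstruction to the Hasse principle for $Y$; since $Y(\bbA_k)\neq\varnothing$ and $\mathrm{Br}(Y)=\mathrm{Br}_0(Y)$ --- so that $Y(\bbA_k)^{\mathrm{Br}}=Y(\bbA_k)\neq\varnothing$, constant Brauer classes obstructing nothing by global reciprocity --- we obtain $Y(k)\neq\varnothing$. Concretely, the fibration argument yields $t\in\bbP^1(k)$ for which the fibre $Y_t$ is a conic over $k$ with a $k_v$-point at every place $v$, so by Hasse--Minkowski $Y_t$, and hence $Y$, has a $k$-point.

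The main obstacle is the middle step: one must show that the vanishing of $H^1(k,\Pic\overline{Y})$, together with the presence of the bisection $C$, genuinely forces the four degenerate fibres of $\pi$ into a configuration that the fibration method handles unconditionally --- in effect ruling out the ``Ch\^{a}telet-type'' patterns of bad fibres whose treatment would otherwise rest on Schinzel's hypothesis --- and this demands a careful analysis of the Galois action on the sixteen lines of $Y$ and of the way it interacts with the chosen conic bundle structure.
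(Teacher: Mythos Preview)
Your overall strategy coincides with the paper's: exhibit the conic bundle structure coming from $C$ and invoke the Hasse principle for conic bundle surfaces. The paper's proof is in fact just a two-line citation --- the linear system of $C$ gives a conic bundle $Y\to\bbP^1_k$ (cf.\ \cite{Is}), and one then applies the Hasse principle for conic bundle surfaces from \cite{Sb1} (see also \cite{Sb2} and the exposition \cite{Co}).

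Where you go astray is in your final paragraph: the ``main obstacle'' you describe is a phantom. You seem to believe that the only unconditional input available is the Ch\^{a}telet surface theorem of \cite{CSS} (which presupposes a section), so that one must first massage the Galois action on the sixteen lines into a favourable configuration. But the whole point of the references \cite{Sb1}, \cite{Sb2}, \cite{Co} --- the last of which is entitled precisely \emph{Surfaces rationnelles fibr\'ees en coniques de degr\'e~4} --- is that for \emph{every} conic bundle surface over $\bbP^1_k$ with four geometric degenerate fibres the Brauer--Manin obstruction is the only obstruction to the Hasse principle, unconditionally and with no restriction on the arrangement of the bad fibres. No case analysis of the lines is required, and Schinzel's hypothesis plays no role here (it only enters for conic bundles with substantially more than four degenerate fibres; Ch\^{a}telet surfaces themselves are handled unconditionally in \cite{CSS}). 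Once you grant this theorem, the surjectivity hypothesis gives $\mathrm{Br}(Y)=\mathrm{Br}_0(Y)$, the obstruction vanishes, and you are done; your ``middle step'' is simply not needed.
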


\begin{proof}
    If $C\subset Y$ is a (smooth) conic, then the linear system of $C$ defines a conic bundle morphism $Y\to \bbP^1_k$ (cf. e.g. \cite{Is}). We may thus apply the Hasse principle for conic bundle surfaces established in \cite{Sb1} (cf. also \cite{Sb2}). For another proof which is also due to the author (letter to Skorobogatov 27/3/87) cf. the report by Colliot-Th\'el\`ene \cite{Co}.
\end{proof}

To prove \eqref{03} and \eqref{04} we consider as in \cite{CSS} $k$-hyperplane
sections containing the given conic and smooth $k_v$-points for each
place $v$ of $k$. It is quite simple to construct such hyperplane sections, but some work is needed to obtain geometrically integral hyperplane sections of this type and some degenerate cases have to be treated by other methods. To prove \eqref{03} for $n=6$, it is necessary to first generalize \eqref{04} to a wider class of intersections of two quadrics in $\bbP^5_k$ including some singular varieties. When $n=5$, we must construct hyperplane sections with trivial Brauer group, in order to apply \eqref{05}. To achieve this we use a strong version of Hilbert's irreducibility theorem.

\section{Intersections of two quadrics in $\bbP^4$}

To start the induction, we will use the following result.

\begin{theorem}\label{11}
    
Let $k$ be a number field and $F$, $G$ be two quadratic forms in $X_0,X_1,\dots,X_4$ with coefficients in $k$ such that $F(X_0,X_1,X_2,0,0)$ is of rank $3$ and $G(X_0,X_1,X_2,0,0)$ is the zero form. Suppose further that $F$ is of rank $5$, $G$ is of rank greater of equal to $3$ and that the discriminant polynomial $$P(\lambda)=\det(F+\lambda G)$$ is irreducible. Then the $k$-subscheme
$Y\subset \bbP^4_k$ defined by $F=G=0$ satisfies:
\begin{itemize}
    \item[(i)]  $Y$ is geometrically integral and of codimension 2 in $\bbP^4_k$.
    \item[(ii)] If $V$ is a smooth proper $k$-model of $Y$, and $\bar k$ an algebraic closure of $k$, then 
    $$H^1\big(\Gal(\bar k/k),\Pic (V\times \bar k)\big)=0$$ 
    and the functorial map 
    $$H^2_{\text{\'et}}(k,\bbG_m) \to H^2_{\text{\'et}}(Y,\bbG_m)$$
    is surjective.
    \item[(iii)] $Y$ satisfies the smooth Hasse principle and weak approximation.
\end{itemize}
\end{theorem}

\begin{proof}
    \begin{itemize}
        \item[(i)] This is a special case of \cite[1.11]{CSS}
        \item[(ii)] This is clear from \cite[3.19]{CSS} if the following hypothesis does not hold:
            
(E) \textit{In the pencil $\mu F+\lambda G\; (\mu,\lambda \in \bar k)$, there exists exist a pair of non-proportional forms of rank $4$ both defined over $k$ or both defined over a quadratic extension $K$ of $k$ and conjugate under the Galois group $\Gal(K/k)$.}

But if (E) were true, then the homogeneous quintic polynomial
$$D(\mu,\lambda)=\mu^5 P(\lambda/\mu)\in k[\mu,\lambda]$$ would contain a quadratic factor in $k[\mu,\lambda]$ with two different zeroes in $\bbP^1_{\bar k}$, which is impossible since $P(\lambda)$ is irreducible and the rank of $G$ is greater or equal to 3.

\item[(iii)] If the singularities of $Y$ are not isolated, then this is proved in \cite[3.15(iv)]{CSS}. If $Y$ is singular with isolated singularities and not an Iskovskih surface, then one may find a proof in the paper of Coray and Tsfasman \cite{CrTs}. If $Y$ is an lskovskih surface (i.e. if $Y$ contains exactly two singular points and these are not rational over $k$), then the result follows from (ii) and \cite[9.7]{CSS}. Finally, if $Y$ is smooth, then the Hasse principle follows from (ii) and \eqref{05}, while weak approximation is treated in \cite{CoSk} and \cite{SbSk}.
\end{itemize}
\end{proof}

\section{Intersections of quadrics in $\bbP^5$}

\begin{theorem}\label{21}
    Let $k$ be a number field and $\bar k$ be an algebraic closure of $k$. Let $F$, $G$ two quadratic forms in $x_0,x_1,\dots,x_5$ with coefficients in $k$ such that $F(x_0,x_1,x_2, 0, 0 ,0)$ is of rank $3$ and $G(x_0, x_1, x_2, 0, 0 ,0)$ is the zero form. Suppose that the following conditions are satisfied:
\begin{itemize}
    \item[(i)] $\rk(F)=6$
    \item[(ii)] $\rk(F+\lambda G)\geq 5$ for each $\lambda \in \bar k$
    \item[(iii)] $\rk(G)\geq 3$ 
\end{itemize}
Then the subscheme $X\subset \bbP^5_k$ defined by $F=G=0$ is a pure and geometrically integral intersection satisfying the smooth Hasse principle and weak approximation.
\end{theorem}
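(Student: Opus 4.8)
The strategy is to reduce the statement about $X\subset\bbP^5_k$ to the already-established Theorem 1.1 about intersections of two quadrics in $\bbP^4_k$ by cutting with a carefully chosen pencil of $k$-hyperplanes through the distinguished plane $x_3=x_4=x_5=0$ (the plane on which $F$ restricts to a rank-$3$ form and $G$ vanishes). More precisely, I would first record the geometric facts: since $\rk(F)=6$ and $\rk(F+\lambda G)\ge 5$ for all $\lambda\in\bar k$, the discriminant form $D(\mu,\lambda)=\det(\mu F+\lambda G)$ is a nonzero sextic form in $\mu,\lambda$ with no repeated roots except possibly... — actually one must check that the generic pencil member is a smooth quadric and that the base locus $X$ is a pure, geometrically integral fourfold of the expected codimension $2$; this should follow from the rank hypotheses together with \cite[1.11]{CSS} (the same citation used for (i) of Theorem 1.1) applied in $\bbP^5$.

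Next, the main construction: I would look at hyperplanes $H\subset\bbP^5_k$ containing the conic $C$ cut out on the plane $\{x_3=x_4=x_5=0\}$ by $F$, equivalently hyperplanes of the form $a x_3 + b x_4 + c x_5 = 0$ with $(a:b:c)\in\bbP^2_k$, and consider $Y_H = X\cap H\subset H\cong\bbP^4_k$. The pair of quadrics defining $Y_H$ in $\bbP^4_k$ will be the restrictions $F|_H$, $G|_H$; by construction $G$ still vanishes on the distinguished $\bbP^2$ inside $H$ and $F|_H$ still restricts to rank $3$ there, so the normalization hypotheses of Theorem 1.1 are automatically preserved. What must be arranged by choosing $(a:b:c)$ generically is: (1) $\rk(F|_H)=5$, (2) $\rk(G|_H)\ge 3$, and (3) the discriminant polynomial $P_H(\lambda)=\det(F|_H+\lambda G|_H)$ is irreducible over $k$. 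Conditions (1) and (2) are Zariski-open and nonempty conditions on $(a:b:c)$ — nonemptiness follows from the rank hypotheses (i)–(iii) on $F,G$ themselves (a generic hyperplane section drops the rank of a quadric by at most the dimension count, and one checks the distinguished plane forces the needed lower bounds). Condition (3) is the delicate one and is where I expect to use a Hilbert irreducibility / Bertini-irreducibility argument over the number field $k$: the discriminant $D(\mu,\lambda)$ of the ambient pencil restricted appropriately, or rather the family $\{P_H\}_{(a:b:c)}$, is a family of degree-$5$ polynomials whose generic member should be irreducible provided the ambient geometry is non-degenerate, and Hilbert irreducibility then yields infinitely many $k$-rational $(a:b:c)$ with $P_H$ irreducible.

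Finally, the approximation/local-point bookkeeping: I must choose $(a:b:c)\in\bbP^2(k)$ simultaneously (i) in the Hilbert-irreducible set for condition (3), (ii) in the open sets for (1), (2), and (iii) close $v$-adically, for each $v$ in a suitable finite set $S$ of places, to a choice making $Y_H$ contain a smooth $k_v$-point — here one uses that $X$ has a smooth $k_v$-point $P_v$ and takes $H$ through $P_v$ (the implicit function theorem guarantees $Y_H$ is smooth at $P_v$ for $H$ in a $v$-adic neighborhood, away from finitely many bad $H$), and at the finitely many remaining places one uses that a smooth quadric fourfold / the base locus has points everywhere locally by Lang–Weil and Hensel. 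Combining these via the strong approximation-style simultaneous-approximation available for $\bbP^2$ (an open condition, a $v$-adic condition at finitely many places, and a thin-set-avoidance condition are jointly satisfiable by Hilbert irreducibility over $k$), I obtain a single $H$ defined over $k$ such that $Y_H\subset\bbP^4_k$ satisfies all hypotheses of Theorem 1.1 and has a smooth $k_v$-point at every place $v$. Theorem 1.1(iii) then gives a smooth $k$-point on $Y_H\subset X$, and weak approximation for $X$ follows by letting $S$ grow and the target point vary; the inclusion $Y_H^{\sm}\subset X^{\sm}$ and $H^1(\Gal,\Pic)=0$ for $Y_H$ feed the weak approximation statement as well. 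The main obstacle is unquestionably step (3) combined with the degenerate cases: one must verify that the restricted discriminant family is genuinely non-constant with irreducible generic fibre — i.e. that the ambient pencil does not have the special structure ((E)-type degeneration) that would force $P_H$ to factor for every $H$ — and one must separately handle the hyperplanes for which $Y_H$ fails to be geometrically integral or drops into a still-more-degenerate stratum, excising them as a further proper closed (or thin) subset of the $(a:b:c)$-space.
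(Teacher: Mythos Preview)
Your plan is essentially the paper's own proof: cut by $k$-hyperplanes $H\supset\Pi=\{x_3=x_4=x_5=0\}$, arrange that the restricted pencil satisfies the hypotheses of Theorem~1.1, use Hilbert irreducibility with $v$-adic approximation to pick $H$, and then invoke Theorem~1.1(iii). Three points where the paper is more concrete than your sketch are worth recording.

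First, the paper does \emph{not} try to secure $\rk(G|_H)\ge 3$ for general $H$ when $\rk(G)$ is small. Instead it disposes of $\rk(G)\le 3$ immediately via \cite[3.14]{CSS}, and treats $\rk(G)=4$ by an entirely separate argument (the vertex line of the cone $G=0$ meets $\{F=0\}$ in a $k$-point or a conjugate pair of singular points of $X$, and one applies \cite[2.1]{CSS} or \cite[9.1]{CSS}); only then does the hyperplane-section argument run, under the strengthened hypothesis $\rk(G)\ge 5$, which makes $\rk(G|_H)\ge 3$ automatic for every $H$. Your claim that condition~(2) is a nonempty open condition on $(a:b:c)$ is not obviously correct when $\rk(G)\in\{3,4\}$ and $H$ is constrained to contain $\Pi$, so this preliminary reduction is genuinely needed.

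Second, for your ``main obstacle'' (3), the paper's mechanism is Claim~2.5: one forms the incidence variety $Z\subset\bbA^1_\lambda\times V$ of pairs $(\lambda,H)$ with $Q_\lambda\cap H$ singular, and shows that $Z$ has a unique $2$-dimensional component by computing the \emph{generic} fibre over $\bbA^1_\lambda$: the generic quadric $Q$ is smooth with $Q\cap\Pi$ a smooth conic, so the dual quadric $\tilde Q\subset\tilde\bbP^5$ is smooth and meets the plane $V\subset\tilde\bbP^5$ in a smooth (hence irreducible) conic. Irreducibility of $Z$ over an open $V_0\subset V$ then feeds directly into Ekedahl's effective Hilbert irreducibility. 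Finally, for local solvability of $Y_H$ at places $v\notin S$ you do not need Lang--Weil: since $Y_H\supset C=X\cap\Pi$ and $S$ already contains every place where $C(k_v)=\emptyset$, the conic itself supplies the smooth local points.
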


We first note that the result is known if $G$ is of rank less of equal to $3$ \cite[3.14]{CSS}. If $\rk(G)=4$, the quadric $G=0$ is a cone with a singular line $\ell$. All points of the intersection $\{F=0\}\cap \ell$ are singular points of $X$. This intersection possesses a $k$-point or a pair of conjugate points defined over a quadratic extension $K\supset k$. In the first case $X$ is $k$-birational to a geometrically integral quadric in $\bbP^4_k$ by \cite[2.1]{CSS} which implies the result. In the second case the result follows from \cite[9.1]{CSS} since the the special case (E) is not possible as $G$ is the only quadric of rank $4$ in the pencil $\mu F+\lambda G\; (\mu,\lambda \in \bar k)$.  We may therefore replace (iii) by the stronger condition
\begin{equation}\tag{2.2}\label{22}
    \rk(G)\geq 5
\end{equation}
Next, let $\Pi=\bbP^2_k$ be the $k$-plane defined by $x_3=x_4=x_5=0$. If $P$ is a point on $X\backslash \Pi$, let $\langle P,\Pi\rangle$ be the $3$-dimensional linear subspace
generated by $P$ and $\Pi$. The following result is true for a $k$-variety as in \eqref{21} for any perfect field $k$.

\setcounter{theorem}{2}

\begin{lemma}\label{23}
There exists a Zariski open subset $X_0\subset X$ such that for any $k$-point $P\in X_0\backslash \Pi$ there exists a hyperplane  $H\supset \Pi$ defined over $k$ such that $P$ is a smooth point on the intersection $X\cap H$.
\end{lemma}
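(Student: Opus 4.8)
\emph{Proof plan.} The plan is to determine, for a point $P\notin\Pi$, exactly which hyperplanes $H\supset\Pi$ through $P$ make $P$ a smooth point of $X\cap H$, and then to take for $X_0$ the complement of the locus where no such $H$ exists. Write the coordinates of $\bbP^5_k$ as $x_0,x_1,x_2$ (the coordinates on $\Pi$) together with $x_3,x_4,x_5$, and let $U\subset k^6$ be the subspace of vectors whose first three coordinates vanish. Since $F(x_0,x_1,x_2,0,0,0)$ has rank $3$ and $G(x_0,x_1,x_2,0,0,0)=0$, the intersection $X\cap\Pi$ is the smooth conic $\{F(x_0,x_1,x_2,0,0,0)=0\}$, so $X\setminus\Pi$ is the set of points of $X$ at which $(x_3,x_4,x_5)\ne(0,0,0)$. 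The hyperplanes containing $\Pi$ are the $H_{\alpha,\beta,\gamma}=\{\alpha x_3+\beta x_4+\gamma x_5=0\}$, so those also passing through a fixed $P=(p_0,\dots,p_5)\notin\Pi$ form a line $L_P$ inside the $\bbP^2$ of hyperplanes through $\Pi$; this $L_P$ is defined over $k$ whenever $P$ is.

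Next I would record the smoothness criterion. As $X\cap H$ is cut out in $\bbP^5_k$ by the three forms $F,G,H$, it is smooth of dimension $2$ at $P$ precisely when the $3\times6$ Jacobian with rows $\nabla F(P),\nabla G(P),\nabla H(P)$ has rank $3$; for $H=H_{\alpha,\beta,\gamma}$ the last row is the constant vector $(0,0,0,\alpha,\beta,\gamma)\in U$. Hence $X\cap H$ is smooth at $P$ iff $P\in X_{\sm}$ — which makes $W_P:=\langle\nabla F(P),\nabla G(P)\rangle$ two-dimensional — and $\nabla H(P)\notin W_P$. Euler's relation gives $\nabla F(P)\cdot P=2F(P)=0$ and $\nabla G(P)\cdot P=0$, so $W_P\subseteq P^{\perp}$; moreover the vectors $\nabla H(P)$ with $H\in L_P$ are exactly the nonzero elements of $U_P:=U\cap P^{\perp}$, which is two-dimensional because $P\notin\Pi$. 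Thus a suitable $H$ exists over $\bar k$ iff $U_P\ne W_P$, and since both are two-dimensional subspaces of $P^{\perp}$ this is equivalent to $W_P\not\subseteq U$, i.e.\ to at least one of the linear forms $\partial F/\partial x_i$, $\partial G/\partial x_i$ $(i=0,1,2)$ being nonzero at $P$.

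This identifies the locus to avoid: let $Z$ be the closed subscheme of $X_{\sm}$ cut out by $\partial F/\partial x_i=\partial G/\partial x_i=0$ $(i=0,1,2)$. The key point is that $Z$ is a \emph{proper} subset of $X$. If it were not, the linear forms $\partial F/\partial x_0,\partial F/\partial x_1,\partial F/\partial x_2$ would vanish on a dense subset of $X$, hence on $X$; but $X$ spans $\bbP^5_k$ (were $X$ contained in a hyperplane, the restrictions of $F$ and $G$ to that hyperplane would violate one of \textup{(i)}, \textup{(ii)}, $\rk G\ge5$, since a quadric of rank $\ge 5$ cannot restrict to a product of at most two linear forms, nor can the pencil contain such a quadric), so no nonzero linear form vanishes on $X$ and we would get $\partial F/\partial x_0=\partial F/\partial x_1=\partial F/\partial x_2=0$, contradicting $\rk F(x_0,x_1,x_2,0,0,0)=3$. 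Hence $Z$ is a proper closed subset, and I would take $X_0:=X_{\sm}\setminus\overline{Z}$ (closure in $X$), a dense Zariski open subset defined over $k$.

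The final step is descent to $k$. For a $k$-point $P\in X_0\setminus\Pi$, the spaces $W_P$, $U$ and $P^{\perp}$ are $k$-rational, and $P\notin Z$ gives $W_P\not\subseteq U$, so $\dim(W_P\cap U_P)\le1$; hence the ``bad'' members of $L_P$, those with $\nabla H(P)\in W_P$, form the projectivization of $W_P\cap U_P$, which is at most one $k$-rational point of $L_P\cong\bbP^1_k$. Since $\bbP^1_k$ carries at least two $k$-points and at most one of them is excluded, there is a $k$-rational $H\supset\Pi$ with $\nabla H(P)\notin W_P$, and then $P$ is a smooth point of $X\cap H$ by the criterion of the second paragraph. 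The only step that is not pure linear algebra is the properness of $Z$; this is where the rank hypotheses on $F$ and on the pencil enter, through the non-degeneracy of $X$ in $\bbP^5_k$, and it is the part I would expect to require the most care.
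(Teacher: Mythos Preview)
Your proof is correct and takes a genuinely different route from the paper's. The paper invokes Bertini's theorem: the linear system $\mathcal{L}=\langle x_3,x_4,x_5\rangle$ is base-point-free on $X_{\sm}\setminus\Pi$, so a general member meets it smoothly, and $X_0$ is then defined via the induced morphism $\phi\colon X_{\sm}\setminus\Pi\to\bbP(\mathcal{L})$. You instead work directly with the Jacobian criterion, identifying the obstruction as the closed locus $Z$ where both $\nabla F(P)$ and $\nabla G(P)$ lie in $U=\{(0,0,0,*,*,*)\}$, and ruling out $Z=X$ via the non-degeneracy of $X$ in $\bbP^5_k$. Your approach is more elementary --- no appeal to Bertini --- and yields a completely explicit description of $X_0$ as the complement of a linear section of $X$; it also makes the descent to $k$ transparent, since the bad hyperplanes through a given $P$ form at most one $k$-rational point of the pencil $L_P\cong\bbP^1_k$. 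The Bertini argument, by contrast, is more portable and would apply unchanged to any base-point-free linear system. One small remark: your parenthetical justification that $X$ spans $\bbP^5_k$ is somewhat compressed; the cleanest argument is simply that $X$ is an integral complete intersection, so its saturated homogeneous ideal is $(F,G)$, which contains no nonzero linear form.
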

\begin{proof} We need the following result:
\begin{theorem*}[Bertini]
Let $X$ be a smooth quasi-projective variety over a field $k$ of characteristic 0 and let 
 $\mathcal{L}$ be a linear system of divisors on $X$. Assume that $\mathcal{L}$ has no base points. Then there is a Zariski open subset $U\subset |\mathcal{L}|$ such that any divisor $D\in U$ is smooth.
    
\end{theorem*}

Let $\mathcal{L}\subset \Gamma\big(\bbP^5_k,\mathcal{O}(1)\big)=\Gamma\big(X,\mathcal{O}(1)\big)$ be the linear system on $X$ consisting of linear forms which vanish on $\Pi$. The set of base points of $\mathcal{L}$
is $X\cap \Pi$. Then $\mathcal{L}$ induces a base-point-free linear system on the smooth quasi-projective variety $X_{\sm}\backslash \Pi$. By Bertini's theorem we deduce that the intersection $(X_{\sm}\backslash \Pi)\cap H$ is smooth for hyperplanes $H$ in an open subset $U \subset \bbP(\mathcal{L})$. Let $\phi\colon X_{\sm}\backslash \Pi \to \bbP(\mathcal{L})$ be the natural map and let $X_0=\phi^{-1}(U)$. Then $X_0\subset X_{\sm}\backslash \Pi$ is open and for any point $P \in X_0$ there exists a hyperplane $H\supset \Pi$ which intersects $X$ transversally at $P$.
\end{proof}

\begin{remark}\label{24}
Let $U=X_0\backslash \Pi$. Then $U$ is Zariski-open and non-empty. This implies by the implicit function theorem \cite[164]{CCS} that $U(k_v)$ is dense in $X_{\sm}(k_v)$ in the $v$-adic topology for any place $v$ of $k$.
\end{remark}

Let $V= \bbP^2_k$ be the $k$-variety parametrizing the hyperplanes containing $\Pi$ and $Z\subset \bbA^1_k\times V$ be the incidence variety consisting of pairs $(Q_{\lambda}, H)$ of a quadric $$Q_{\lambda}:\;F+\lambda G=0$$ and a hyperplane $H\supset \Pi$ for which $Q_{\lambda}\cap H$ is singular. Thus $Z\subset \bbA^1_k\times V$ is defined by the equation $$\det\big((F+\lambda G)|_H\big)=0.$$

\begin{claim}\label{25}
 The variety $Z$ does not coincide with $\bbA^1_k\times V$ and has only one irreducible component of dimension $2$.
\end{claim}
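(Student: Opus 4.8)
The plan is to study the projection $\pi\colon Z\to\bbA^1_k$ onto the $\lambda$-coordinate, whose fibre over $\lambda_0$ is the discriminant locus of the hyperplane sections $Q_{\lambda_0}\cap H$ with $H\supset\Pi$. Since $Z$ is cut out in the smooth threefold $\bbA^1_k\times V$ by the single equation $\det\big((F+\lambda G)|_H\big)=0$, Krull's principal ideal theorem shows that, as soon as $Z\neq\bbA^1_k\times V$, the scheme $Z$ is non-empty and pure of dimension $2$; so the assertion is equivalent to saying that $Z\subsetneq\bbA^1_k\times V$ and that $Z$ is irreducible. I would prove irreducibility by exhibiting a single component of $Z$ dominating $\bbA^1_k$ (via the generic fibre of $\pi$) and ruling out any ``vertical'' component, i.e.\ any fibre $\pi^{-1}(\lambda_0)$ equal to all of $V$.

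The computational input is a Schur-complement reduction exploiting the hypotheses on $\Pi$. Write the Gram matrix of $F+\lambda G$ in block form relative to the splitting $x_0,x_1,x_2\mid x_3,x_4,x_5$ as $\left(\begin{smallmatrix}A&B_\lambda\\ B_\lambda^{\mathsf T}&C_\lambda\end{smallmatrix}\right)$. Because $G(x_0,x_1,x_2,0,0,0)$ is the zero form, the block $A$ is the constant matrix of $F|_\Pi$, which is invertible since $F(x_0,x_1,x_2,0,0,0)$ has rank $3$; let $C'_\lambda=C_\lambda-B_\lambda^{\mathsf T}A^{-1}B_\lambda$ be the Schur complement. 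Then $\rk(F+\lambda G)=3+\rk(C'_\lambda)$, so hypothesis (ii) forces $\rk(C'_\lambda)\ge 2$ for every $\lambda$, and the set $U\subset\bbA^1_k$ on which $\rk(F+\lambda G)=6$ is a dense open subset (it contains $\lambda=0$ by hypothesis (i)). Identifying $V$ with the $\bbP^2$ of vectors $\tilde a=(a_3,a_4,a_5)$, and combining the elementary identity that the lower-right $3\times3$ block of $\mathrm{adj}\left(\begin{smallmatrix}A&B\\ B^{\mathsf T}&C\end{smallmatrix}\right)$ equals $\det(A)\,\mathrm{adj}(C-B^{\mathsf T}A^{-1}B)$ with the classical fact that a hyperplane section of a smooth quadric is singular exactly when the pole of the hyperplane lies on the quadric, one finds that over $U$ the set $Z$ is cut out (set-theoretically) by the single equation $\tilde a^{\mathsf T}\,\mathrm{adj}(C'_\lambda)\,\tilde a=0$.

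From here the two points follow. Over the function field $k(\lambda)$ the equation $\tilde a^{\mathsf T}\mathrm{adj}(C'_\lambda)\tilde a=0$ defines a plane conic with Gram matrix $\mathrm{adj}(C'_\lambda)$, which is non-degenerate because $\det\mathrm{adj}(C'_\lambda)=\det(C'_\lambda)^2$ is not identically zero; a smooth plane conic is geometrically integral, so the generic fibre of $\pi$ is geometrically integral, whence $Z$ has exactly one irreducible component $Z_1$ dominating $\bbA^1_k$, and $Z_1$ is geometrically integral of dimension $2$. It remains to see that no fibre $\pi^{-1}(\lambda_0)$ equals $V$: if $\rk(F+\lambda_0G)=6$ the fibre is the conic $\{\tilde a^{\mathsf T}\mathrm{adj}(C'_{\lambda_0})\tilde a=0\}$, a proper subvariety of $\bbP^2$ since $\mathrm{adj}(C'_{\lambda_0})\neq0$; and if $\rk(F+\lambda_0G)=5$ the quadric $Q_{\lambda_0}$ is a cone with a single vertex $p_0$, and $Q_{\lambda_0}\cap H$ with $H\supset\Pi$ is singular precisely when $p_0\in H$, which — since $(F+\lambda_0G)|_\Pi=F|_\Pi$ is non-degenerate, forcing $p_0\notin\Pi$ — is a linear condition cutting out a line in $V$, again proper. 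Since $Z$ is pure of dimension $2$, any component distinct from $Z_1$ fails to dominate $\bbA^1_k$, hence lies in a single fibre $\pi^{-1}(\lambda_0)$ and has dimension $2$, hence equals $\pi^{-1}(\lambda_0)=\{\lambda_0\}\times V$; as this was just excluded, $Z=Z_1$, and in particular $Z\subsetneq\bbA^1_k\times V$.

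The step I expect to be the main obstacle is the analysis of the finitely many $\lambda_0$ with $\rk(F+\lambda_0G)=5$: one must check that the fibre of $Z$ there does not jump up to all of $V$, and it is precisely here that hypothesis (ii) and the assumption $G(x_0,x_1,x_2,0,0,0)=0$ (which keeps the vertex $p_0$ of $Q_{\lambda_0}$ off $\Pi$) are needed. A secondary technicality is passing from ``over $U$, $Z$ is cut out by $\tilde a^{\mathsf T}\mathrm{adj}(C'_\lambda)\tilde a=0$'' to the statement about components actually used, which is harmless given purity of dimension.
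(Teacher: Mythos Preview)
Your argument is correct and follows the same strategy as the paper: analyse the projection $Z\to\bbA^1_k$, show that its generic fibre is a smooth plane conic (the paper phrases this via projective duality, observing that the dual quadric $\tilde Q\subset\tilde\bbP^5_{k(\lambda)}$ meets $V$ in a smooth conic --- which is exactly your equation $\tilde a^{\mathsf T}\mathrm{adj}(C'_\lambda)\tilde a=0$, since the Gram matrix of $\tilde Q$ is $\mathrm{adj}(M)$ and its lower $3\times3$ block is $\det(A)\,\mathrm{adj}(C'_\lambda)$), and rule out any full fibre $\{\lambda_0\}\times V$ using $\rk(F+\lambda_0 G)\ge5$ together with the non-degeneracy of $(F+\lambda_0 G)|_\Pi$. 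Your Schur-complement treatment is more explicit --- in particular you work out the rank-$5$ fibre as the line $\{H:p_0\in H\}$, where the paper simply asserts that some $H\supset\Pi$ gives a smooth $Q_{\lambda_0}\cap H$ --- but the two proofs are essentially the same.
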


\begin{proof}
 Let $\lambda\in\bar k$ and $Q_{\lambda}$ be the quadric $F+\lambda G=0$. Then, since $Q_{\lambda}\cap \Pi$ is smooth and $\rk (F+\lambda G)\geq 5$, there is a hyperplane $H\supset\Pi$ such that $Q_{\lambda}\cap H$ is smooth. Hence the projection from $Z\subset \bbA^1_k\times V$ to $\bbA^1_k$ does not contain any fibre of the projection $\bbA^1_k\times V \to \bbA^1_k.$
 It is thus sufficient to prove that the generic fibre of the projection map from $Z$ to $\bbA^1_k$ is irreducible. But the generic form in the pencil defines a smooth quadric $Q\subset\bbP^5_{k(\lambda)}$ which intersects $\Pi$ in a smooth conic. This implies that the dual variety $\tilde Q$ of $Q$ is a smooth quadric in the dual projective space $\tilde \bbP^5_{k(\lambda)}$ which intersects $V\subset \tilde \bbP^5_{k(\lambda)}$ in a smooth conic. But
$\tilde Q\cap V$ is nothing but the generic fibre of $Z \to \bbA^1_k$.

\end{proof}

We now return to the proof of \eqref{21}. We first note that the projection 
$p\colon Z\to V$
is surjective since for any hyperplane $H\in V(\bar k)$ there exists $\lambda\in\bar k$ such that $Q_{\lambda}\cap H$ is singular. From \eqref{25} it is clear that that there exists a non-empty open $k$-subvariety $V_0\subset V=\bbP^2_k$ for which $Z_0=p^{-1}(V_0)$ is irreducible. Since the morphism $p$ is generically finite, there exists a sufficiently small non-empty subset $V_0\subset V$ such that the restriction $p_0\colon Z_0\to V_0$ of $p\colon Z\to V$ is finite and \'etale.

We may then apply the version of Hilbert's irreducibility theorem
described in \cite{Ek} to $p_0$ and conclude:

\begin{claim}\label{26}
The subset of $k$-points in $V_0(k)$ with irreducible inverse
image under $p_0$ is dense in $\displaystyle\prod_{v\in S} V_0(k_v)$ and hence in $\displaystyle\prod_{v\in S} V(k_v)$ for any finite set $S$ of places of $k$.
\end{claim}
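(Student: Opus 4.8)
The plan is to apply the strong (i.e.\ $v$-adic) form of Hilbert's irreducibility theorem of Ekedahl \cite{Ek} directly to the finite \'etale morphism $p_0\colon Z_0\to V_0$. First I would check that all the geometric hypotheses needed for this are already in place: $V_0$ is a non-empty Zariski-open subset of $V=\bbP^2_k$, hence a smooth geometrically integral $k$-variety; the source $Z_0=p^{-1}(V_0)$ is irreducible by \eqref{25}, and, being \'etale over the reduced scheme $V_0$, it is reduced, hence integral --- it is in fact geometrically integral, since by the proof of \eqref{25} the unique two-dimensional component of $Z$ has geometrically integral generic fibre over $\bbA^1_k$ (a smooth conic); and $p_0$ is finite and \'etale by the construction of $V_0$.

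Next I would invoke Ekedahl's theorem: for any finite set $S$ of places of $k$ and any prescribed non-empty $v$-adic open subsets of $V_0(k_v)$ ($v\in S$), there is a point $P\in V_0(k)$ lying in all of them whose fibre $p_0^{-1}(P)$ factors over $k$ in the same way as the generic fibre of $Z_0\to V_0$; equivalently, the set of such $P$ is dense in $\prod_{v\in S}V_0(k_v)$. Since $Z_0$ is integral, the generic fibre $\Spec k(Z_0)$ of $Z_0\to V_0$ is integral, so this says exactly that $p_0^{-1}(P)$ is irreducible for a set of $P$ dense in $\prod_{v\in S}V_0(k_v)$.

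Finally I would pass from density in $\prod_{v\in S}V_0(k_v)$ to density in $\prod_{v\in S}V(k_v)$: for each place $v$ the complement $V\setminus V_0$ is a proper Zariski-closed subset of $\bbP^2_k$, so $V_0(k_v)$ is $v$-adically dense in $V(k_v)=\bbP^2(k_v)$, by the implicit function theorem argument already used in \eqref{24}; hence $\prod_{v\in S}V_0(k_v)$ is dense in $\prod_{v\in S}V(k_v)$, and a set dense in the former is dense in the latter. As for the main difficulty: there is essentially none beyond correctly importing Ekedahl's $v$-adic refinement of Hilbert irreducibility --- the substantive, situation-specific work (arranging that $p_0$ is finite \'etale with irreducible source, via \eqref{25} and the preceding construction of $V_0$) has already been done, and the only point one must be careful about is that \eqref{25} genuinely yields irreducibility of $Z_0$ \emph{over $k$}, which is precisely what forces the specialized fibre $p_0^{-1}(P)$ to be irreducible rather than merely a product of a fixed number of factors.
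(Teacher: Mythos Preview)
Your proposal is correct and follows exactly the route the paper takes: the paper's entire argument for this claim is the single sentence ``We may then apply the version of Hilbert's irreducibility theorem described in \cite{Ek} to $p_0$ and conclude,'' and you have simply spelled out the verification of Ekedahl's hypotheses (finite \'etale $p_0$, irreducible $Z_0$, $V_0$ open in $\bbP^2_k$) and the passage from $\prod V_0(k_v)$ to $\prod V(k_v)$ that the paper leaves implicit. Your extra remark that $Z_0$ is even geometrically integral is true but not needed --- irreducibility of $Z_0$ over $k$ is what Ekedahl requires and what gives irreducibility of the fibre $p_0^{-1}(P)$ over $k$.
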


Now let $C$ be the smooth $k$-conic $X\cap \Pi$ and $S$ be a finite set of places containing each place $v$ for which $C(k_v)$ is empty. Suppose that for each $v\in S$, we are given a smooth $k_v$-point $P_v$ on $X$ and an open $v$-adic neighbourhood $N_v\subset V_{\sm}(k_v)$ of $P_v$. Let us show that there exists
a smooth $k$-point $P$ on $X$ such that $P\in N_v$ for each $v\in S$. For each $v\in S$ there is (cf. \eqref{24}, \eqref{23}) a $k_v$-point $R_v\in N_v$ and a $k_v$-hyperplane $H_v\in V(k_v)$ containing $R_v$ such that $H_v$ intersects $X$ transversally at $R_v$. The implicit function theorem \cite[6.2]{CSS}
implies that if for each $v\in S$ there exists a $v$-adic neighbourhood
$O_v\subset V(k_v)$ of $H_v\in V(k_v)$ such that any $H\in O_v$ intersects X transversally in a $k_v$-point of $N_v$. Now apply \eqref{26} and choose $H\in V_0(k)$ with $p^{-1}(H)$ irreducible and such that $H\in O_v$ for each $v\in S$. Then,
$(X\cap H)_{\sm}(k_v)\cap N_v$ is non-empty for each $v\in S$. Further, since $X\cap H$ contains $C=X\cap \Pi$, we have that $(X\cap H)_{\sm}(k_v)$ is non-empty for each $v\notin S$. We therefore obtain the desired conclusion in the case $X\cap H$ satisfies the hypothesis of \eqref{11}.

We may assume that $H$ is not tangent to the quadric $F=0$, since the set of set of such $H$ form an open subset of $V$ and \eqref{26}
still holds if we replace $V_0$ by a smaller open non-empty subset of $F$. We have then that the restriction of $F$ to the affine cone of $H$ is of maximal rank $5$. We have also by \eqref{22} that the restriction of $G$ to the affine cone of $H$ is of rank greater of equal to 3.

Finally, the condition that $p_0^{-1}(H)$ is irreducible means that the discriminant polynomial $P(\lambda)$ of the restriction of $F+\lambda G$ to the affine cone of $H$ is irreducible. We may therefore apply \eqref{11} to $X\cap H$ and find a smooth $k$-point on $X\cap H$ which belongs to $N_v$ for
each $v\in S$. This completes the proof of \eqref{21}.

\begin{theorem}
 Let $k$ be a number field and $X\subset \bbP^5_k$ be a smooth intersection of two quadrics containing a smooth $k$-conic. Then $X$
satisfies the Hasse principle and weak approximation.   
\end{theorem}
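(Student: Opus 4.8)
The plan is to put $X$ into the form required by \eqref{21}, after disposing of one degenerate configuration. Let $C\subset X$ be the given smooth $k$-conic. Since a smooth conic spans a unique plane and that plane is Galois-stable, the span of $C$ is a $k$-rational $\bbP^2$, and after a $k$-linear change of coordinates on $\bbP^5_k$ I may assume it is the plane $\Pi=\{x_3=x_4=x_5=0\}$. Write $X=\{F=G=0\}$ with $F,G$ quadratic forms over $k$.

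Before the reduction I would handle the case $\Pi\subset X$, which \eqref{21} does not cover (there $X\cap\Pi$ is a smooth conic). If $\Pi\subset X$, then $X$ contains a $k$-line $L$, namely any line of $\Pi\cong\bbP^2_k$; projecting $X$ from $L$ onto a complementary $\bbP^3_k$ is $k$-birational, because for a general point $p\in\bbP^3_k$ the plane $\langle L,p\rangle$ meets each quadric through $X$ in $L$ together with a residual line, hence meets $X$ in $L$ together with a single further point. Thus $X$ is $k$-rational, and in particular satisfies the Hasse principle and weak approximation.

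Now suppose $\Pi\not\subset X$. Then $C\subseteq X\cap\Pi$, and since $C$ is an irreducible plane curve of degree $2$ contained in both $\{F|_\Pi=0\}$ and $\{G|_\Pi=0\}$, each of $F|_\Pi$, $G|_\Pi$ is either $0$ or a nonzero scalar multiple of the rank-$3$ form cutting out $C$; they are not both $0$ (otherwise $\Pi\subset X$), so after replacing $F,G$ by a suitable $k$-basis of the pencil $\langle F,G\rangle$ I may assume $G|_\Pi=0$ and $\rk(F|_\Pi)=3$. As $X$ is a smooth intersection of two quadrics in $\bbP^5$, the binary sextic $\det(sF+tG)$ has six distinct roots and every quadric in the pencil has rank $\geq 5$; hence $\rk(G)\geq 5\geq 3$ and $\rk(F+\lambda G)\geq 5$ for every $\lambda\in\bar k$. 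Finally, since $k$ is infinite I may replace $F$ by $F+cG$ for some $c\in k$ with $(1:c)$ not a root of that sextic, so that $\rk(F)=6$; this changes neither $X$ nor $F|_\Pi$. Now $F$ and $G$ satisfy all the hypotheses of \eqref{21}, so $X$ satisfies the smooth Hasse principle and weak approximation; $X$ being smooth, these are exactly the Hasse principle and weak approximation.

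The one genuine obstacle is recognizing that the case $\Pi\subset X$ really has to be set aside — a smooth intersection of two quadrics in $\bbP^5$ can contain a plane — and treating it separately by the projection/rationality argument; everything else is a matter of choosing the right $k$-basis of the pencil of quadrics together with the standard dictionary between smoothness of $X$ and the roots of $\det(sF+tG)$.
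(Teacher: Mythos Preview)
Your reduction to \eqref{21} is correct and is essentially the paper's own argument: choose a basis of the pencil with $G|_\Pi=0$ and $F|_\Pi$ of rank~$3$, use smoothness of $X$ to get $\rk(F+\lambda G)\geq 5$ for all $\lambda\in\bar k$ and generic rank~$6$, then translate $F$ by a multiple of $G$ so that $\rk(F)=6$.

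Where you depart from the paper is in singling out the case $\Pi\subset X$ and asserting that it can occur. It cannot: a smooth complete intersection of two quadrics in $\bbP^5$ never contains a plane. If $\Pi=\{x_3=x_4=x_5=0\}\subset X$, then each defining form has the shape $\sum_{j=3}^{5} x_j\,\ell_j(x_0,x_1,x_2)+(\text{quadratic in }x_3,x_4,x_5)$, and at $p\in\Pi$ the Jacobian of $(F,G)$ is
\[
\begin{pmatrix} 0&0&0&\ell_3(p)&\ell_4(p)&\ell_5(p)\\ 0&0&0& m_3(p)& m_4(p)& m_5(p)\end{pmatrix}.
\]
Writing $\ell_j(p)=(Ap)_j$ and $m_j(p)=(Bp)_j$ for $3\times 3$ matrices $A,B$, the rank drops at $p$ exactly when $(\alpha A+\beta B)p=0$ for some $(\alpha:\beta)\in\bbP^1(\bar k)$; but $\det(\alpha A+\beta B)$ is a binary cubic, hence has a zero, and any nonzero vector in the corresponding kernel gives a singular point of $X$ on $\Pi$. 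So your extra case is vacuous, your side claim is false, and the paper is right to omit it; once that paragraph is removed your proof coincides with the paper's.
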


\begin{proof}
It follows from the smoothness of $X$ \cite[1.13]{CSS} that each
quadratic form in the $k$-pencil defining $X$
 is of rank greater of equal to 5 and that the generic form is of rank 6. Let $\Pi$ be the plane such that $X\cap \Pi=C$ is the $k$-conic on $X$. Let $F$, $G$ be two quadratic $k$-forms in the pencil defining $X$ such that $F$ is of rank 6, the restriction of $F$ to $\Pi$ is of rank 3 and $G$ vanishes at some point $S\in \Pi\backslash C$. Since $G$ also vanishes on $C$ it vanishes on the whole plane $\Pi$. If we now choose $k$-coordinates $x_0,x_1,\dots,x_5$ such that $\Pi$ is given by $x_3=x_4=x_5=0$, then $X$ satisfies all the hypothesis of \eqref{21}.
\end{proof}

\section{Intersections of two quadrics in $\bbP^n,\;n\geq 6$}

\begin{theorem}\label{31}
Let $k$ be a number field and $\bar k$ be an algebraic closure of $k$. Let $F$, $G$ be  two quadratic forms in $x_0,x_1,\dots,x_n$ where $n\geq 5$
with coefficients in $k$ such that $F(x_0,x_1,x_2,0,\dots,0)$ is of rank $3$ and $G(x_0,x_1,x_2,0,\dots,0)$ is the zero form. Suppose that the following conditions are satisfied:
\begin{itemize}
    \item[(i)] $F$ is of rank $n+1$
    \item[(ii)] $F+\lambda G$ is of rank greater or equal to $5$ for each $\lambda\in\bar k$
    \item[(iii)] $G$ is of rank greater or equal to $3$
\end{itemize}
Then the subscheme $X\subset \bbP^k_n$ defined by $F=G=0$ is a pure and geometrically integral intersection satisfying the smooth Hasse principle and weak approximation.
\end{theorem}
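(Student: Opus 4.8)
**The plan is to induct on $n$, using Theorem 2.1 as the base case $n=5$ and reducing the case $n$ to the case $n-1$ by taking suitable hyperplane sections through the plane $\Pi$.**

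The setup mirrors the proof of \eqref{21} very closely. Let $\Pi=\bbP^2_k$ be the plane $x_3=\dots=x_n=0$; by hypothesis $C=X\cap\Pi$ is a smooth conic, and $G$ vanishes on all of $\Pi$. A Bertini argument identical to \eqref{23}--\eqref{24} produces a non-empty Zariski-open $U\subset X\setminus\Pi$ such that each $k$-point of $U$ lies on a smooth hyperplane section $X\cap H$ with $H\supset\Pi$, and $U(k_v)$ is dense in $X_{\sm}(k_v)$ for every $v$. Here $V=\bbP^{n-2}_k$ parametrizes the hyperplanes containing $\Pi$. I first want to record that $X$ is geometrically integral of codimension $2$: this follows from \cite[1.11]{CSS} exactly as in \eqref{11}(i), using (i) and that the pencil does not degenerate too badly by (ii). The case $\rk(G)\le 3$ can be disposed of directly (as in the remarks preceding \eqref{22} for $n=5$: either $X$ is $k$-birational to a geometrically integral quadric in $\bbP^{n-1}_k$, or \cite[9.1]{CSS} applies since $G$ is the unique rank-$\le 3$ form and (E) fails), so we may assume $\rk(G)\ge 4$, and after discarding hyperplanes tangent to $\{F=0\}$ and hyperplanes on which $G$ drops rank too much — a closed condition avoidable by shrinking — the restrictions $F|_H$ and $G|_H$ will have rank $n$ and rank $\ge 3$ respectively.

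The crux is the combinatorial/geometric claim analogous to \eqref{25}: letting $Z\subset\bbA^1_k\times V$ be the incidence variety of pairs $(Q_\lambda,H)$ with $Q_\lambda\cap H$ singular, I must show $Z\ne\bbA^1_k\times V$ and that the generic fibre of $Z\to\bbA^1_k$ is irreducible. As in \eqref{25}, the generic form in the pencil defines a smooth quadric $Q\subset\bbP^n_{k(\lambda)}$ meeting $\Pi$ in a smooth conic; its dual $\tilde Q$ is again a smooth quadric, and $\tilde Q\cap V$ — the generic fibre of $Z\to\bbA^1_k$ — is a smooth, hence irreducible, quadric section inside $V=\bbP^{n-2}$. (That $\tilde Q\cap V$ is smooth again requires $V$ to meet $\tilde Q$ transversally, which holds because $\Pi\cap Q$ is smooth; this is where condition (ii) together with $n\ge5$ is used.) Consequently there is a non-empty open $V_0\subset V$ over which $p\colon Z\to V$ restricts to a finite étale map $p_0\colon Z_0\to V_0$, and the strong form of Hilbert irreducibility from \cite{Ek} gives: the set of $H\in V_0(k)$ with $p_0^{-1}(H)$ irreducible is dense in $\prod_{v\in S}V(k_v)$ for any finite $S$.

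Now the endgame is verbatim the one for \eqref{21}, with \eqref{11} replaced by the inductive hypothesis (the case $n-1$ of \eqref{31}). Take $S$ to contain every $v$ with $C(k_v)=\emptyset$; given smooth local points $P_v\in X(k_v)$ for $v\in S$ and neighbourhoods $N_v$, use \eqref{23}--\eqref{24} to find $k_v$-hyperplanes $H_v\supset\Pi$ cutting $X$ transversally near $P_v$, then apply the implicit function theorem \cite[6.2]{CSS} and Hilbert irreducibility to produce a single $H\in V_0(k)$, close to each $H_v$, with $p_0^{-1}(H)$ irreducible. Then $X\cap H$ has smooth $k_v$-points near $P_v$ for $v\in S$ and, since it contains $C$, smooth $k_v$-points for $v\notin S$; the irreducibility of $p_0^{-1}(H)$ says exactly that the discriminant polynomial of $(F+\lambda G)|_H$ is irreducible, so $X\cap H$ satisfies all hypotheses of the case $n-1$. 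By induction $X\cap H$ has a smooth $k$-point in the prescribed neighbourhoods, proving the smooth Hasse principle and weak approximation for $X$. The main obstacle is ensuring the hyperplane section $X\cap H$ genuinely lands in the hypotheses of the $(n-1)$-case — in particular that $F|_H$ stays of full rank $n$ and that the irreducibility of $P(\lambda)$ is preserved — and checking these are open conditions compatible with the density statement of \eqref{26}; once the inductive bookkeeping of the rank conditions is pinned down, the analytic and Hilbert-irreducibility inputs are exactly as in \S2.
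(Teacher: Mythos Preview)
Your induction scheme is sound and would go through, but you are working much harder than the paper does, and the extra machinery you import from the $n=5$ case is unnecessary once $n\geq 6$.

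First a small slip: the space $V$ of hyperplanes through the plane $\Pi$ has dimension $n-3$, not $n-2$ (a hyperplane $\sum_{i=3}^n\alpha_i x_i=0$ is given by $n-2$ homogeneous coordinates).

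More substantively, the paper's proof for $n\geq 6$ dispenses with both the Bertini argument and Hilbert irreducibility. The key observation is that the inductive hypothesis is \emph{Theorem \ref{31} in dimension $n-1$}, whose input is the three rank conditions (i)--(iii), \emph{not} irreducibility of the discriminant polynomial. These rank conditions are visibly Zariski-open on $V$: one defines $V_0\subset V$ by requiring that $H$ avoid a chosen singular point $M_i$ on each singular quadric $Q_{\lambda_i}$ of the pencil and not be tangent to $\{F=0\}$; then \cite[1.16]{CSS} gives that (i), (ii), (iii) persist for $X\cap H$ whenever $H\in V_0(k)$. Ordinary weak approximation on $V=\bbP^{n-3}_k$ then supplies the required $k$-hyperplane, with no need for the incidence variety $Z$ or Ekedahl's theorem. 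The paper also uses the much stronger elementary reduction $\rk(G)\leq n-2$ from \cite[3.14]{CSS} (rather than just $\rk(G)\leq 3$), so one may assume $\rk(G)\geq n-1$; this makes (iii) for the section automatic.

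Finally, the Bertini lemma \eqref{23} is replaced by a one-line dimension count (Claim 3.3 in the paper): for $n\geq 6$ the tangent space $T_PX$ has dimension $n-2\geq 4$, while $\langle P,\Pi\rangle$ has dimension at most $3$, so one can always choose $H\supset\langle P,\Pi\rangle$ not containing $T_PX$. Your route via irreducible discriminant does force (ii) (simple roots give $\rk\geq n-1\geq 5$), so it is not wrong; it is just that the Hilbert-irreducibility hammer was needed in \S2 only because the target Theorem \ref{11} genuinely demands an irreducible discriminant, whereas Theorem \ref{31} does not.
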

\begin{proof}
The case $n=5$ has already been treated, so we may and shall assume that $n\geq 6$ and that the result has already been proved in lower dimensions. We shall also use the fact that the elementary case when $\rk(G)\leq n-2$ is known \cite[3.14]{CSS} and assume that:
\begin{equation}\tag{3.2}\label{32}
\rk(G)\geq n-1
\end{equation}

Let $\Pi\subset \bbP^k_n$ be the $k$-plane defined by $x_3=x_4=\dots=x_n=0$ and $V=\bbP^{n-3}_k$ be the $k$-variety parametrizing the hyperplanes containing $\Pi$. Let us fix a singular $\bar k$-point $M_i$ on each singular quadric in the pencil $F+\lambda G=0\;(\lambda\in\bar k)$. Let
$V_0\subset V$ be the open $k$-subvariety of hyperplanes 
\begin{equation}\tag{$\star$}\label{x}
    \sum_{i=3}^n \alpha_i x_i =0
\end{equation}
such that \eqref{x} does not contain any of the finite set of points $M_i$ and \eqref{x} is not tangent to the quadric $F=0$ i.e. the restriction of $F$ to \eqref{x} is of rank $n$. It is easy to see that $V_0$ is non-empty. Suppose that $n\geq 6$. Then, if (i), (ii) and \eqref{32} hold  for $X$ and $H\in V_0(k)$ then (i), (ii) and (iii) hold for the restrictions of the quadratic forms to the affine cone of $X\cap H$ \cite[1.16]{CSS}. We have thus by the induction assumption that the smooth Hasse principle and weak approximation hold for $X\cap H$ if $H\in V_0(k)$.

Now let $C$ be the smooth $k$-conic $X\cap H$ and $S$ be a finite set of places containing each place $v$ for which $C(k_v)$ is empty. Suppose that for each $v\in S$, we are given a smooth $k_v$-point $P_v$ on $X$ and an open neighbourhood $N_v\subset V_{\sm}(k_v)$ of $P_v$. Let us show that there exists a smooth $k$-point $P$ on $X$ such that $P\in N_v$ for each $v\in S$. The proof is similar but simpler than for $n=5$. The following result will replace lemma \eqref{23}.

\setcounter{theorem}{2}
\begin{claim}\label{33}
Let $X\subset \bbP^n_k,\; n\geq 6$ and $\Pi$ be as above and $K\supset k$ be an extension of fields. Then, if $P$ is a smooth $K$-point $X_K$, then there exists a $K$-hyperplane $H\supset\langle P, \Pi\rangle$ intersecting $X_K$ transversally in $P$.
\end{claim}
\begin{proof}
    Just choose a hyperplane $H\supset\langle P, \Pi\rangle$ which does not contain the tangent space of $X$ at $P$.
\end{proof}

We now complete the proof of \eqref{31}. Let $H_v\in V(k_v)$ be $k_v$-hyperplanes containing $P_v$ such that $H_v$ intersects $X$ transversally in $P_v$ for each $v\in S$. We may then repeat the implicit function argument used in the proof of \eqref{21} and find $v$-adic neighbourhoods $O_v\subset V(k_v)$ of $H_v\in V(k_v)$
such that any $H\in O_v$ intersects $X$ transversally at a $k_v$-point in $N_v$. Now use the fact that $V_0(k)$ is dense in $\prod_{v\in S} {V(k_v)}$ and apply \eqref{21} to $X\cap H$ for some $H\in V_0(k)$ such that $H\in O_v$ for each $v\in S$. This completes the proof of \eqref{31}.

\end{proof}

\begin{theorem}\label{34}
 Let $k$ be a number field and $X\subset \bbP^n_k,\; n\geq 6$ be a pure non-conical geometrically integral intersection of two quadrics. Suppose that $X$ contains a smooth $k$-conic $C$. Then the smooth Hasse principle and weak approximation hold for $X$.
\end{theorem}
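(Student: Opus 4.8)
The plan is to put $X$ into the shape required by Theorem \eqref{31} and then quote that result; essentially all the arithmetic is carried by \eqref{31} (hence by \eqref{05} and the Hasse principle for conic bundle surfaces), and what remains here is a short normalisation together with the treatment of a few degenerate configurations. Write $\mathcal P$ for the pencil of quadratic forms cutting out $X$ and let $\Pi\cong\bbP^2_k$ be the $k$-plane spanned by the smooth $k$-conic $C$. First I would dispose of the case $\Pi\subset X$: then $X$ contains a $k$-rational plane, and projecting $X$ away from $\Pi$ exhibits it as $k$-birational to a variety fibred in lines over an open subset of $\bbP^{n-3}_k$, so $X$ is $k$-unirational and the Hasse principle and weak approximation hold trivially (equivalently, this is one of the configurations of \cite{CSS} in which $X$ contains a linear subspace). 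So I may assume $\Pi\not\subset X$, and choose $k$-coordinates $x_0,\dots,x_n$ with $\Pi=\{x_3=\dots=x_n=0\}$.

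Every member of $\mathcal P$ vanishes on the geometrically integral conic $C$, hence restricts on $\Pi$ to a scalar multiple of the equation of $C$, a form of rank $3$; since $\Pi\not\subset X$, the restriction map $\mathcal P\to\langle\,\text{eqn of }C\,\rangle$ is surjective, with a one-dimensional kernel. Thus, up to scalars, there is a unique $G\in\mathcal P$ with $G(x_0,x_1,x_2,0,\dots,0)=0$, and every member of $\mathcal P$ not proportional to $G$ restricts to a rank-$3$ form on $\Pi$. Because $X$ is a non-conical geometrically integral intersection of codimension $2$, the classification of pencils of quadrics \cite[\S1]{CSS} shows that the discriminant $\det(sQ+tQ')$ of $\mathcal P$ is not identically zero; hence the members of rank $n+1$ form a dense open subset of $\mathcal P$, and since $\{G\}$ is a single point I may choose $F\in\mathcal P$ with $\rk(F)=n+1$ and $F$ not proportional to $G$, so that $F(x_0,x_1,x_2,0,\dots,0)$ is a nonzero multiple of the equation of $C$, hence of rank $3$. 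If some member of $\mathcal P$ has rank $\le n-2$, the Hasse principle and weak approximation for $X$ already follow from the elementary case \cite[3.14]{CSS}. Otherwise every member of $\mathcal P$ has rank $\ge n-1$, and since $n\ge 6$ this gives $\rk(F+\lambda G)\ge n-1\ge 5$ for all $\lambda\in\bar k$ and $\rk(G)\ge n-1\ge 3$; thus $(F,G)$ satisfies all the hypotheses of Theorem \eqref{31}, which yields the smooth Hasse principle and weak approximation for $X$.

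The main obstacle is not a long computation but making sure the degenerate configurations peeled off above are genuinely covered. Three points need care: the case $\Pi\subset X$, where one must know that containing a $k$-rational plane forces $k$-unirationality and weak approximation by an explicit projection; the fact that for a non-conical geometrically integral complete intersection of two quadrics the discriminant of the pencil cannot vanish identically, which rests on the Segre-symbol/Kronecker classification in \cite[\S1]{CSS}; and the observation that, because $n\ge 6$ implies $4\le n-2$, every quadric of rank $\le 4$ in $\mathcal P$ lies in the range $\rk\le n-2$ covered by \cite[3.14]{CSS}, so that condition (ii) of Theorem \eqref{31} comes for free once that elementary case has been set aside. With these three points in hand, the reduction to Theorem \eqref{31} is immediate.
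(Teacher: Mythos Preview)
Your reduction to Theorem~\eqref{31} is the right overall strategy, but the dichotomy ``some member of $\mathcal P$ has rank $\le n-2$'' versus ``all members have rank $\ge n-1$'' has a genuine gap. The result \cite[3.14]{CSS} that you invoke in the first branch requires a \emph{$k$-rational} quadric of rank $\le n-2$ in the pencil; it says nothing when the low-rank members live only over $\bar k$. Concretely, for $n=6$ the pencil can contain exactly two quadrics of rank $\le 4$ which are conjugate over a quadratic extension $K/k$ and hence neither defined over $k$. In that situation condition~(ii) of Theorem~\eqref{31} fails, so you cannot apply it, and \cite[3.14]{CSS} is inapplicable because there is no $k$-rational low-rank member. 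The paper isolates precisely this case by a multiplicity count on $P(\lambda)=\det(F+\lambda G)$: a member of rank $\le 4$ forces a zero of $P$ of multiplicity $\ge n-3$, so there are at most two such members and only when $n\in\{6,7\}$. It then reduces $n=7$ to $n=6$ by a further hyperplane section, and handles $n=6$ by recognising an affine open of $X$ as the Weil restriction $R_{K/k}(Q)$ of a smooth affine quadric $Q$ over $K$ (alternatively one may quote \cite[\S12]{CSS} on intersections containing two conjugate skew lines). This step is missing from your argument and cannot be absorbed into either branch of your dichotomy.

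Two smaller points. First, your assertion that non-conical and geometrically integral force $\det(sQ+tQ')\not\equiv 0$ is not what \cite[\S1]{CSS} says; the paper does not rule this out but instead observes that if the discriminant vanishes identically then $X_{\sing}(k)\ne\emptyset$, and one projects from a $k$-rational singular point via \cite[2.1]{CSS}. Second, your cutoff at $n-2$ is stronger than needed and actually hurts you: Theorem~\eqref{31} only requires rank $\ge 5$, so members of rank between $5$ and $n-2$ are harmless, and splitting at $4$ (as the paper does) both sharpens the dichotomy and makes the multiplicity count that detects the exceptional $n=6,7$ cases visible.
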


\begin{proof}
    We may introduce $k$-coordinates $x_0,x_1,\dots,x_n$ such that $X$ is defined by two quadratic forms $F(x_0,x_1,\dots x_n)$ , $G(x_0,x_1,\dots x_n)$ for which $F(x_0,x_1,x_2,0,\dots,0)$ is of rank 3 and $F(x_0,x_1,x_2,0,\dots,0)$ is the zero form. We may also assume that $P(\lambda)=\det(F+\lambda G)$ does not vanish identically since otherwise $X_{\sing}(k)$ is non-empty and the conclusion follows from \cite[2.1]{CSS}. But if $P(\lambda)$ is non-trivial, then $F+\lambda G$ is of maximal rank for some $\lambda \in k$ and we may assume that $\rk(F) =n+1$ by a linear change of coordinates in the pencil of quadrics.

    Recall that \cite[1.15]{CSS} $P$ and $\lambda \in \bar k$ has a zero of multiplicity at least $n+1-\rk(F+\lambda G)$. Thus if $\rk(F+\lambda G)\leq 4$ for $s$ different values $\lambda\in\{\lambda_1,\lambda_2,\dots,\lambda_s\}$ then 
    $$
    s(n+1-4)\leq \deg(P)\leq n+1
    $$
    and hence 
    $$
    s\Big(1-\frac{4}{7}\Big)\leq s\Big(1-\frac{4}{n+1}\Big)\leq 1.
    $$
    
    This implies that $s\leq 2$ with equality only if $n+1=7$ or $n+1=8$. If $s=0$ i.e. if $\rk(F+\lambda G)\geq 5$ for each $\lambda \in \bar k$, then we obtain the desired conclusion from \eqref{31} and if $s=1$, then this $\lambda_1$ belongs to $k$ and we may apply 
    \cite[3.14]{CSS}. So suppose that $\rk(F+\lambda G)\leq 4$ for exactly two $\lambda \in \bar k$, say $\lambda_1$ and $\lambda_2$. If $n+1=8$ then $P$ has two zeroes $\lambda_1$ and $\lambda_2$ of multiplicity 4 and hence no further zeroes. If $n+1 =7$, then $P$ has two zeroes $\lambda_1$ and $\lambda_2$ of multiplicity at least 3 and hence at most one further zero which must then be a simple zero which belongs to $k$. There are thus in both cases $n+1=7$ or $n+1=8$ only two possibilities. Either $\lambda_1$ and $\lambda_2$ belong to $k$ or $\lambda_1$ and $\lambda_2$ belong to a quadratic extension $K\supset k$ and are conjugate under $\Gal(K/k)$. If $\lambda_1$ and $\lambda_2$ belong to $k$, then we may apply the elementary result
    \cite[3.14]{CSS} once more. So suppose that $\lambda_1$ and $\lambda_2$ do not belong to $k$. The case $n=7$ is easily deduced from the case $n=6$ by considering hyperplane sections containing a smooth conic and applying \eqref{33} and the arguments in the end proof of \eqref{31}.

    We are therefore left with varieties in $\bbP^6_k$ defined by two quadratic forms $F+\lambda_i G$ with $\lambda_i\in K\backslash k$ of rank 4 which are conjugate under $\Gal(K/k)=\{\id, \sigma\}$. The singular loci of the quadrics $F+\lambda_i G$ are then a pair of disjoint skew planes conjugate under $\Gal(K/k)$. Let us choose $K$-coordinates $x_0,x_1,\dots,x_6$ such that the coordinate $x_6$ is defined over $k$, the first plane is given by $x_3=x_4=x_5=x_6=0$ and the second one is given by $x_0=x_1=x_2=x_6=0$. Then the first quadric is given by a quadratic form $T(x_3,x_4,x_5,x_6)$ and the second one by the conjugate form $T^{\sigma}(x_0,x_1,x_2,x_6)$. Let $H\subset \bbP^6_k$ be the $k$-hyperplane $x_6=0$ and $X_0=X\backslash H$. Then $X_0$ is a smooth affine $k$-subvariety of $\bbA^6_k=\bbP^6_k\backslash H$ which is defined by equations $T(x_3,x_4,x_5,1)=0$, $T^{\sigma}(x_0,x_1,x_2,1)=0$, so it can be decomposed into a product $X_0=Q\times Q^{\sigma}$ where $Q\subset \bbA^3_K$ is the smooth quadric given by $T(x_3,x_4,x_5,1)=0$. So $X_0$ is the Weil restriction $R_{K/k}(Q)$ \cite[pp. 191-195]{BLR}. Therefore the Hasse principle
    and weak approximation hold for $X_0$ if and only if they hold for $Q$. But the Hasse principle for $Q$ follows from Hasse's theorem on quadratic forms and weak approximation from the fact that a $K$-quadric with a $K$-point is $K$-rational. This completes the proof of \eqref{34}.
\end{proof}

\begin{remark}
 One can also finish the proof by applying the much more general result in \cite[\S 12]{CSS} on intersections of quadrics containing two conjugate skew lines. But the proof of this result is very long
and we therefore found it natural to include the simple Weil restriction argument.
\end{remark}

\end{document}